\documentclass[a4paper,12pt]{article}

\usepackage[top=3.0cm,bottom=3.0cm,left=2.25cm,right=2.25cm]{geometry}
\usepackage{amsmath,amsthm,mathrsfs,graphicx,amsfonts}
\usepackage{bm}
\usepackage{cases}
\usepackage[english]{babel}
\usepackage{amsmath,amsthm}
\usepackage{amsfonts}
\usepackage{latexsym}
\usepackage{graphicx}
\usepackage{txfonts}
\usepackage[numbers,sort&compress]{natbib}
\usepackage[natural]{xcolor}
\usepackage{rotating}
\usepackage{mathtools}
\usepackage{enumitem}

\newtheorem{lem}{Lemma}[section]
\newtheorem{thm}[lem]{Theorem}
\newtheorem{cor}[lem]{Corollary}

\newtheorem{conj}[lem]{Conjecture}

\newcommand{\NP}{\mathbf{N}_{\mathcal P}}
\newcommand{\cC}{\mathcal C}
\newcommand{\supp}{\operatorname{supp}}
\newcommand{\defeq}{\vcentcolon=}

\usepackage{authblk}

\title{\bf A sharp asymptotic bound for odd cycles in planar graphs}
\author{
Zhen Liu\footnote{Email: 1552580575@qq.com},
~Chuanshu Wu\footnote{Email: cswu97@126.com (Corresponding author)}\\
{\small Center for Discrete Mathematics, Fuzhou University, Fujian, 350003, China}}

\date{\today}

\begin{document}

\maketitle

\begin{abstract}
For graphs $G$ and $H$, let $\mathbf N(G,H)$ denote the number of unlabeled, not necessarily induced copies of $H$ in $G$, and let $\NP(n,H)$ be the maximum of $\mathbf N(G,H)$ over all $n$-vertex planar graphs $G$. We prove that, for every fixed integer $m\geq 3$,
$$\NP(n,C_{2m+1})=2m\left(\frac{n}{m}\right)^m+O_m\!\left(n^{m-1/5}\right).$$
The proof uses a sharp weighted cycle--path inequality for edge probability measures on finite complete graphs. This strengthens a conjecture of Heath, Martin, and Wells and, together with their reduction lemma, yields the stated asymptotic formula.

\medskip
\noindent\textbf{Keywords:} odd cycles, extremal graph theory, planar graphs,
weighted graphs

\medskip
\noindent\textbf{2020 Mathematics Subject Classification:} 05C10, 05C35, 05C38
\end{abstract}

\section{Introduction}

All graphs in this paper are finite and simple. Let $P_k$ and $C_k$ denote, respectively, the path and the cycle on $k$ vertices. For graphs $G$ and $H$, let $\mathbf N(G,H)$ denote the number of unlabeled, not necessarily induced copies of $H$ in $G$. For a planar graph
$H$, define
\[\NP(n,H)\defeq \max\{\mathbf N(G,H):G\text{ is an $n$-vertex planar graph}\}.\]

The problem of maximizing the number of copies of a fixed graph in a planar graph goes back to Hakimi and Schmeichel \cite{HakimiSchmeichel1979} and Alon and Caro \cite{AlonCaro1984}.
Hakimi and Schmeichel determined $\NP(n,C_3)$ and $\NP(n,C_4)$ exactly, while Huynh, Joret, and Wood \cite{HuynhJoretWood2022} determined the
order of magnitude of $\NP(n,H)$ for every fixed planar graph $H$.
A natural next problem is to determine the sharp leading coefficient and, when possible, the exact value.

The exact formula for $\NP(n,C_5)$ conjectured by Hakimi and Schmeichel \cite{HakimiSchmeichel1979} was later proved by Gy\H{o}ri, Paulos, Salia,
Tompkins, and Zamora \cite{GyoriEtAl2025}.
For longer even cycles, Cox and Martin determined $\NP(n,C_6)$, $\NP(n,C_8)$, $\NP(n,C_{10})$, and $\NP(n,C_{12})$ asymptotically \cite{CoxMartin2022,CoxMartin2023}.
They also introduced weighted reduction lemmas that reduce planar subgraph-counting problems to weighted optimization problems on complete
graphs, and conjectured the sharp leading term for every even cycle.
Lv, Gy\H{o}ri, He, Salia, Tompkins, and Zhu \cite{LvEtAl2024} proved this conjecture, showing that, for every fixed $m\geq 3$,
\[\NP(n,C_{2m})=\left(\frac{n}{m}\right)^m+o(n^m).\]

Heath, Martin, and Wells \cite{HeathMartinWells2025} developed an analogous weighted reduction for odd cycles. For every $m\geq 3$, a balanced
construction based on a copy of $C_m$ gives
\begin{equation}\label{eq:lower}
  \NP(n,C_{2m+1}) \geq 2m\left(\frac{n}{m}\right)^m-O_m(n^{m-1}).
\end{equation}
Moreover, for $m\in\{3,4\}$, they proved that
$\NP(n,C_{2m+1})=2m\left(\frac{n}{m}\right)^m+O_m\!\left(n^{m-1/5}\right),$
whereas for $m\geq 5$, they showed that
$\NP(n,C_{2m+1})\leq 2.7\,m\left(\frac{n}{m}\right)^m+O_m\!\left(n^{m-1/5}\right).$
Wang, Gy\H{o}ri, and He \cite{WangGyoriHe2026} recently determined, for every fixed $m\geq 3$ and all sufficiently large $n$, the exact maximum
number of copies of $C_{2m+1}$ in planar graphs containing no shorter odd cycle.

In this paper, we determine $\NP(n,C_{2m+1})$ asymptotically by proving a sharp weighted cycle--path inequality.
To formulate this inequality, we adopt the weighted notation of Heath, Martin, and Wells \cite{HeathMartinWells2025}.
Let $K$ be a finite complete graph. An \emph{edge probability measure} on $K$ is a function $\mu:E(K)\to[0,1]$ such that $\sum_{e\in E(K)}\mu(e)=1.$
Let $\Delta^K$ denote the set of all edge probability measures on $K$, and let $\supp\mu\defeq\{e\in E(K):\mu(e)>0\}.$
For a subgraph $H'\subseteq K$, define
\[\mu(H')\defeq\prod_{e\in E(H')}\mu(e).\]
For a graph $H$, let $\cC(K,H)$ denote the set of all unlabeled, not necessarily induced copies of $H$ in $K$, and define
\[\beta(\mu;H)\defeq \sum_{H'\in\cC(K,H)}\mu(H').\]

The following lemma reduces the planar problem to a weighted optimization problem.

\begin{lem}[Heath, Martin, and Wells~\cite{HeathMartinWells2025}]
\label{lem:reduction}
For every $n$-vertex planar graph $G$ and every integer $m\geq 3$, there
exist a finite complete graph $K$ and a measure $\mu\in\Delta^K$ such that
\[\mathbf N(G,C_{2m+1})\leq \bigl(2m\cdot\beta(\mu;C_m)+\beta(\mu;P_{m+1})\bigr)n^m +O_m\!\left(n^{m-1/5}\right).\]
\end{lem}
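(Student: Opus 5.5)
This is the reduction lemma of Heath, Martin, and Wells, so the plan follows the Cox--Martin weighted-reduction strategy for odd cycles.

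\textbf{Setup.} Fix an $n$-vertex planar graph $G$ and $m\ge 3$. A copy of $C_{2m+1}$ has vertices $v_0,\dots,v_{2m}$. Cycles of this length in planar graphs are counted mainly through the following structure: there are $m$ ``hub'' vertices $x_1,\dots,x_m$ of large degree, and each consecutive pair $x_i,x_{i+1}$ is joined through a degree-two vertex (a $2$-path $x_i y x_{i+1}$). The extra odd vertex comes from replacing one such $2$-path by a path of length three, or by a direct edge.

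\textbf{Step 1: regularize the graph.} Since $G$ is planar, each pair $\{x,y\}$ has $d(x,y)$ common neighbours with $\sum_{\{x,y\}}d(x,y)=O(n)$. Moreover, $K_{3,3}$-freeness bounds the number of pairs with more than $t$ common neighbours by $O(n/t)$. Choose a threshold $t=n^{\alpha}$ (with $\alpha$ tuned to produce the exponent $1/5$). Let $X$ be the set of vertices lying in some ``heavy'' pair, i.e.\ a pair with $d(x,y)\ge t$. Planarity of the auxiliary graph of heavy pairs gives $|X|=O(n^{1-\alpha})$.

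Now bound the copies of $C_{2m+1}$ that use fewer than $m$ vertices of $X$ in the ``hub'' role. Such a copy must contain at least $m+2$ vertices chosen essentially freely with bounded multiplicity. Using Huynh--Joret--Wood type counting (the order of magnitude of $\NP(n,H)$ is $n^{\mathrm{flap}}$), these copies number $O(n^{m-1}t^{O(1)})$ or $O(n^{m}/t^{c})$. Balancing the two bounds gives the error $O_m(n^{m-1/5})$.

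\textbf{Step 2: define the measure.} Let $K$ be the complete graph on $X$. For each heavy pair $xy$, set
\[\mu(xy)=\frac{d(x,y)}{n}\]
and normalize so that $\sum_e\mu(e)=1$. Since $\sum d(x,y)\le 2n$-ish, the normalization loses at most a constant factor absorbed into the inequality direction; if $\sum\mu(e)<1$, rescale upward, which only increases the right-hand side.

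The surviving copies of $C_{2m+1}$ fall into two types.
\begin{itemize}
\item[(a)] The hub sequence $x_1,\dots,x_m$ forms a cycle $C_m$ in $K$. Every link is a $2$-path except one, which is a $3$-path $x_i y z x_{i+1}$ with $y\in N(x_i)$ and $z$ a common neighbour. The number of such copies is at most $\prod_e d(e)\cdot(\text{choices for the extra vertex})$. The factor $2m$ arises because the extra vertex can be inserted at one of $m$ links on either side. This contributes $\le 2m\,\beta(\mu;C_m)n^m$.
\item[(b)] The hubs $x_0,\dots,x_m$ form a path $P_{m+1}$ whose $m$ links are $2$-paths, and the cycle closes via the edge $x_0x_m$. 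This contributes $\le\beta(\mu;P_{m+1})n^m$.
\end{itemize}
Multiplying the common-neighbour counts over the edges gives exactly $\mu(H')n^m$.

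\textbf{Main obstacle.} The hardest part is type (a): controlling the inserted vertex. In a planar graph, the number of pairs $(y,z)$ with $y\in N(x_i)$ and $z\in N(y)\cap N(x_{i+1})$, summed appropriately, must be at most about $2\,d(x_i,x_{i+1})$ up to lower-order terms. To get this, I would first try a planarity argument: the common neighbours of $x_i$ and $x_{i+1}$ form nested faces, so each such $y$ adds only $O(1)$ extra $3$-paths except near boundary faces. I would charge the remaining configurations to the error term via Step 1.
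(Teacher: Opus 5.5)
\textbf{Genuine gap.} The paper does not prove this lemma; it imports it from Heath, Martin, and Wells, so your sketch has to stand on its own. It does not, because the measure you define cannot work.

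\textbf{The measure misses $3$-path links.} You set $\mu(xy)\propto d(x,y)$ and then claim that the number of $3$-paths $x_i\,y\,z\,x_{i+1}$ is at most about $2d(x_i,x_{i+1})$. This is false. Take hubs $x_1,\dots,x_m$ arranged cyclically. Join each of $x_2x_3,\dots,x_mx_1$ by $N$ common neighbours of degree $2$, and join $x_1$ to $x_2$ by $k$ internally disjoint paths $x_1y_jz_jx_2$. This graph is planar. Its only $(2m+1)$-cycles go once around all the links, since cycles inside a link have length $4$ or $6$. Hence $\mathbf N(G,C_{2m+1})=kN^{m-1}$, which is about $n^m/(2m^m)$ for $k\approx n/(2m)$. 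But $d(x_1,x_2)=0$, so your $\mu(x_1x_2)=0$. The support of $\mu$ is then a path with $m-1$ edges, so $\beta(\mu;C_m)=\beta(\mu;P_{m+1})=0$ however you rescale. These $\Theta(n^m)$ copies cannot be charged to an $O_m(n^{m-1/5})$ error. The weight of a hub pair must also account for $3$-path links that avoid common neighbours. Roughly, $\mu(xy)n$ must dominate both $d(x,y)$ and half the number of such $3$-paths. The real work is then a planarity argument bounding the total mass of this enlarged weight.

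\textbf{The normalization is not harmless.} Your claim $\sum_{\{x,y\}}d(x,y)=O(n)$ is false unless you restrict to heavy pairs: a star gives $\binom{n-1}{2}$. Even the correct $O(n)$ bound for heavy pairs does not suffice. Both $\beta(\mu;C_m)$ and $\beta(\mu;P_{m+1})$ are homogeneous of degree $m$ in $\mu$, so rescaling by a total mass $S$ costs a factor $S^m$. That destroys the exact leading coefficient the lemma asserts, and no ``constant factor'' can be absorbed. To keep the stated error you need $S\le 1+O_m(n^{-1/5})$. This means showing that all but $O(n^{4/5})$ low vertices are charged to at most one hub pair. For common neighbours outside $X$, a $K_{3,3}$-type count helps here: a planar bipartite graph between $X$ and $V(G)\setminus X$ has only $O(|X|)$ vertices with three or more neighbours in $X$. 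The same no-double-counting statement must also be proved for the vertices charged to $3$-path links. Finally, Step~1 only gestures at which copies are discarded and where the exponent $1/5$ comes from. It would need to be made precise for whatever hub set you end up using.
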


Heath, Martin, and Wells proposed the following conjecture.

\begin{conj}[Heath, Martin, and Wells~\cite{HeathMartinWells2025}]
\label{conj:HMW}
For every finite complete graph $K$, every integer $m\geq 3$, and every $\mu\in\Delta^K$,
\[2m\cdot\beta(\mu;C_m)+\beta(\mu;P_{m+1})\leq \frac{2}{m^{m-1}}.\]
Equality holds if and only if $\mu$ is the uniform measure on the edge set of a copy of $C_m$ in $K$.
\end{conj}

They further proposed the stronger version obtained by replacing the coefficient of $\beta(\mu;P_{m+1})$ by $2$. Our main result proves this stronger inequality.

\begin{thm}\label{thm:weighted}
For every finite complete graph $K$, every integer $m\geq 3$, and every $\mu\in\Delta^K$,
\begin{equation}\label{eq:strong}
  2m\cdot\beta(\mu;C_m)+2\cdot\beta(\mu;P_{m+1}) \leq \frac{2}{m^{m-1}}.
\end{equation}
Moreover, equality in the inequality of Conjecture~\ref{conj:HMW} holds if and only if $\mu$ is the uniform measure on the edge set of a copy of $C_m$ in $K$.
\end{thm}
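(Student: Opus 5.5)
We will prove \eqref{eq:strong} by first reducing to a structural problem about the support of an optimal measure, and then analysing a single \emph{path-to-cycle} closing inequality. The key identity we aim to exploit is that every copy of $C_m$ contains exactly $m$ copies of $P_m$ (delete one edge), and every copy of $P_{m+1}$ has $m$ edges. This suggests comparing both $m\beta(\mu;C_m)$ and $\beta(\mu;P_{m+1})$ with a single quantity built from walks.

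\textbf{Step 1 (walk bound).} Write $d(v)=\sum_{u}\mu(uv)$, so that $\sum_v d(v)=2$. Let $W$ be the weighted count of closed walks and of walks of length $m$, with the walk weight equal to the product of the edge weights. For the normalized adjacency matrix $A=(\mu(uv))$, we have
\[
2m\,\beta(\mu;C_m)\le \operatorname{tr}A^m
\qquad\text{and}\qquad
2\beta(\mu;P_{m+1})\le \mathbf 1^{T}A^{m}\mathbf 1-(\text{non-path walks}).
\]
This is too lossy on its own, since $\mathbf 1^TA^m\mathbf 1$ can be large. Instead, we will use a combined quantity.

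\textbf{Step 2 (the main inequality).} For a copy $P$ of $P_{m}$ with endpoints $x,y$, the cycles containing $P$ contribute $\mu(P)\mu(xy)$. The paths $P_{m+1}$ extending $P$ contribute $\mu(P)\bigl(d(x)+d(y)-(\text{weights of edges back into }P)\bigr)$. Summing over $P$, we get
\[
2m\beta(\mu;C_m)+2\beta(\mu;P_{m+1})\le 2\sum_{P\cong P_m}\mu(P)\bigl(\mu(xy)+\tfrac{1}{m}\cdot\ldots\bigr).
\]
The precise bookkeeping is chosen so that each $P_{m+1}$ is counted twice (once for each end edge) and each $C_m$ is counted $m$ times. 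It then suffices to show the following: for each $P$ with vertex set $V(P)$, the ``closing mass'' $\mu(xy)+\tfrac12(\text{outgoing weight at }x,y)$ is at most the total weight outside $E(P)$, adjusted by the factor needed. We then apply AM–GM to $\mu(P)\cdot(\text{remaining mass})$. The target is the extremal value $m\cdot(1/m)^m\cdot 2/m\cdots$, consistent with uniform weight $1/m$ on a $C_m$, where the left side equals $2m\cdot m^{-m}=2/m^{m-1}$ and $\beta(\mu;P_{m+1})=0$.

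\textbf{Step 3 (Lagrange/support reduction, the fallback).} If the direct AM–GM fails, we take a maximizer $\mu$; it exists by compactness of $\Delta^K$. The KKT conditions say that the partial derivative $\partial_e F$ of $F=2m\beta(C_m)+2\beta(P_{m+1})$ is equal for all $e\in\supp\mu$, and that this common value is $mF$ by homogeneity of degree $m$. We then use local moves: merging two vertices, or shifting the weight of an edge onto an adjacent one. The goal is to show that the support of a maximizer is a cycle or a path-like structure. On a cycle $C_k$ with $k>m$, or on a path, $F$ reduces to $2\beta(P_{m+1})$, which is a symmetric function of $m$-edge windows. Maclaurin-type bounds then give $2\beta(P_{m+1})\le 2\cdot k\cdot k^{-m}<2/m^{m-1}$. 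When $k=m$, the cycle term dominates and uniform weights are optimal by AM–GM, which also yields the equality characterization.

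\textbf{Main obstacle.} The hardest part is the support reduction: ruling out supports containing vertices of degree at least $3$, or several overlapping short cycles, such as $K_4$-like blow-ups. In these configurations both $C_m$ and $P_{m+1}$ copies proliferate. My first attempt will be a symmetrization or Zykov-style argument on two vertices $u,v$, using the fact that $F$ is multilinear in the weights of the edges at a vertex when those edges are not simultaneously used. Failing that, I will try induction on $|\supp\mu|$, deleting a minimum-weight edge and bounding its contribution via the KKT equality $\partial_eF=mF$.
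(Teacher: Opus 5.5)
Your proposal does not yet prove \eqref{eq:strong}. You discard Step~1 yourself. Step~2 stops at placeholders (``$\tfrac1m\cdot\ldots$'', ``adjusted by the factor needed''). Step~3 rests on a support reduction (ruling out vertices of degree $\ge 3$, overlapping short cycles, blow-ups) that you explicitly leave open, and that is where the whole difficulty sits. The identity behind Step~2 is correct, and in fact exact: summing over copies $P$ of $P_m$ with ends $x,y$ gives $2m\beta(\mu;C_m)+2\beta(\mu;P_{m+1})=\sum_P\mu(P)\bigl(2\mu(xy)+\sum_{z\notin V(P)}(\mu(xz)+\mu(yz))\bigr)$. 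But there is no evident way that an AM--GM bound applied copy by copy and then summed yields $2/m^{m-1}$. The copies share edges and their number grows with $|V(K)|$. You need a mechanism that collapses the global sum to a single local quantity \emph{before} applying AM--GM.

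That mechanism is the KKT identity $\partial_eF=mF$ you already state in Step~3, used at one well-chosen edge: a \emph{heaviest} edge $xy$ of a maximizer, with $a=\mu(xy)$. The argument needs this edge to dominate every other weight, so a minimum-weight edge, as in your last sentence, is the wrong choice.

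Let $G=K-xy$; it has total weight $W=1-a$ and all edge weights at most $a$. The copies of $C_m$ through $xy$ correspond to $(m-1)$-edge $x$--$y$ paths in $G$; call their weighted sum $R_{m-1}$. The copies of $P_{m+1}$ through $xy$ correspond to ordered pairs of vertex-disjoint paths rooted at $x$ and $y$ with total length $m-1$; call their weighted sum $D_{m-1}$. With this, $\partial_{xy}F=mF$ becomes $\max F=\frac2m\bigl(D_{m-1}+mR_{m-1}\bigr)$.

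This rooted quantity yields to induction on the length: delete the root $x$ and apply AM--GM. First prove $A_k+aA_{k-1}\le((W+a)/k)^k$, where $A_k$ is the weighted sum of $k$-edge paths from a single root. Then prove $D_k+R_k+aD_{k-1}\le(W+a)^k/(k+1)^{k-1}$ for two roots. The maximality of $a$ enters through one injection: cut each $x$--$y$ path at each of its $k$ edges, which gives $kR_k\le aD_{k-1}$. Hence $D_{m-1}+mR_{m-1}\le 1/m^{m-2}$ and $F\le 2/m^{m-1}$, with no structural classification of the support.

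The equality statement needs no classification of maximizers either. Equality in the coefficient-$1$ inequality, combined with \eqref{eq:strong}, forces $\beta(\lambda;P_{m+1})=0$ and $\beta(\lambda;C_m)=m^{-m}$. The equality case of the weighted cycle bound of Lv et al.\ then finishes.
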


Combining Theorem~\ref{thm:weighted} with Lemma~\ref{lem:reduction} and the lower bound \eqref{eq:lower}, we obtain the following corollary.

\begin{cor}\label{cor:planar}
For every fixed integer $m\geq 3$,
\[\NP(n,C_{2m+1})=2m\left(\frac{n}{m}\right)^m+O_m\!\left(n^{m-1/5}\right).\]
\end{cor}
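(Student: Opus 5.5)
The statement to prove is Corollary~\ref{cor:planar}. The plan is to sandwich $\NP(n,C_{2m+1})$ between the lower bound \eqref{eq:lower} and an upper bound coming from Lemma~\ref{lem:reduction} together with Theorem~\ref{thm:weighted}. The lower bound is already in the required form: $2m(n/m)^m - O_m(n^{m-1})$. Since $n^{m-1}\le n^{m-1/5}$ for $n\ge 1$, this is at least $2m(n/m)^m - O_m(n^{m-1/5})$. So only the upper bound needs an argument.

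For the upper bound, fix an arbitrary $n$-vertex planar graph $G$. By Lemma~\ref{lem:reduction} there are a finite complete graph $K$ and a measure $\mu\in\Delta^K$ such that
\[\mathbf N(G,C_{2m+1})\le \bigl(2m\,\beta(\mu;C_m)+\beta(\mu;P_{m+1})\bigr)n^m+O_m\!\left(n^{m-1/5}\right).\]
Every term of $\beta(\mu;P_{m+1})$ is a product of values in $[0,1]$, so $\beta(\mu;P_{m+1})\ge 0$. Hence
\[2m\,\beta(\mu;C_m)+\beta(\mu;P_{m+1})\le 2m\,\beta(\mu;C_m)+2\,\beta(\mu;P_{m+1})\le \frac{2}{m^{m-1}},\]
where the last step is \eqref{eq:strong}. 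Substituting, we get
\[\mathbf N(G,C_{2m+1})\le \frac{2}{m^{m-1}}n^m+O_m(n^{m-1/5})=2m\left(\frac{n}{m}\right)^m+O_m(n^{m-1/5}).\]

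The bound on the weighted expression holds uniformly, with no dependence on $K$ or $\mu$. The implied constant in the error term comes only from Lemma~\ref{lem:reduction} and depends only on $m$. Therefore we may take the maximum over all planar $G$ on $n$ vertices, which gives $\NP(n,C_{2m+1})\le 2m(n/m)^m+O_m(n^{m-1/5})$. Combined with the lower bound, this proves the corollary.

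There is essentially no obstacle at this stage; the only care needed is in checking that the error constants are uniform in $G$. All the real difficulty lies in proving Theorem~\ref{thm:weighted} itself.
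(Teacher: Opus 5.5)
Your proposal is correct and takes the same route as the paper. The upper bound comes from Lemma~\ref{lem:reduction} combined with \eqref{eq:strong}, using $\beta(\mu;P_{m+1})\geq 0$ and $\frac{2}{m^{m-1}}n^m=2m\left(\frac{n}{m}\right)^m$, and the lower bound is \eqref{eq:lower}; your remark that the error constant must be uniform in $G$ is bookkeeping the paper leaves implicit.
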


\section{Preliminaries}

For a graph $G$, let $V(G)$ and $E(G)$ denote its vertex set and edge set, respectively. For $x\in V(G)$, let $G-x$ denote the graph obtained from
$G$ by deleting $x$ and all edges incident with $x$. For $xy\in E(G)$, let $G-xy$ denote the graph obtained by deleting the edge $xy$.
A \emph{$k$-path} in $G$ is a sequence of distinct vertices $v_0,v_1,\ldots,v_k$ such that $v_{i-1}v_i\in E(G)$ for every $1\leq i\leq k$. Thus a $k$-path has $k$ edges and $k+1$ vertices; in particular, $P_k$ is a $(k-1)$-path. A $0$-path consists of a single vertex and is called a trivial path.

A \emph{weighted graph} is a pair $(G,w)$, where $G$ is a graph and $w:E(G)\to[0,\infty)$ is an edge-weight function. We write $W\defeq\sum_{e\in E(G)}w(e)$
for the total edge weight of $(G,w)$. Whenever $H$ is a subgraph of $G$, we give each edge of $H$ the same weight as in $G$ and continue to denote the resulting edge-weight function by $w$. For convenience, we set $w(xy)=0$ whenever $xy\notin E(G)$.

For a path $P=v_0v_1\cdots v_k$, write $V(P)\defeq\{v_0,v_1,\ldots,v_k\}$
and define its \emph{product weight} by \[w(P)\defeq\prod_{i=1}^{k}w(v_{i-1}v_i).\]
We use the convention that an empty product is $1$. Thus every trivial path has product weight $1$.

For a fixed weighted graph $(G,w)$, we now define three weighted path sums. For $x\in V(G)$ and $k\geq 0$, let $\mathcal{P}_k(G;x)$ denote the set of all $k$-paths in
$G$ starting at $x$, and define \[A_k(G;x)\defeq\sum_{P\in\mathcal{P}_k(G;x)}w(P).\]
In particular, $A_0(G;x)=1.$ For distinct vertices $x,y\in V(G)$ and $k\geq 1$, let $\mathcal{P}_k(G;x,y)$ denote the set of all $k$-paths in $G$ from $x$
to $y$, and define \[R_k(G;x,y)\defeq\sum_{P\in\mathcal{P}_k(G;x,y)}w(P).\]
In particular, $R_1(G;x,y)=w(xy).$ Finally, for distinct vertices $x,y\in V(G)$ and $k\geq 0$, define
\[D_k(G;x,y)\defeq\sum_{j=0}^{k}\mathop{\sum}\limits_{\substack{
    P\in\mathcal{P}_j(G;x)\\
    Q\in\mathcal{P}_{k-j}(G;y)\\
    V(P)\cap V(Q)=\varnothing}}w(P)w(Q).\]
Thus, $D_k(G;x,y)$ is the weighted sum over all ordered pairs $(P,Q)$ of vertex-disjoint paths starting at $x$ and $y$, respectively, whose lengths sum to $k$. In particular, $D_0(G;x,y)=1.$

\section{Proof of Theorem~\ref{thm:weighted}}

We first establish three inequalities for the weighted path sums defined above. The case $a=0$ of the next lemma follows from \cite[Lemma 2.5]{CohenAntonirShapira2024}.

\begin{lem}\label{lem:one-root}
Let $(G,w)$ be a weighted graph with total edge weight $W$. For every $a\geq 0$, every vertex $x\in V(G)$, and every integer $k\geq 1$,
$A_k(G;x)+aA_{k-1}(G;x)\leq \left(\frac{W+a}{k}\right)^k.$
\end{lem}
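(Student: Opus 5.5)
Induction on $k$, with the statement taken over all weighted graphs, all vertices and all $a\ge 0$ at once. The first step is a recursion. Every $k$-path from $x$ starts with an edge $xy$ and then continues as a $(k-1)$-path from $y$ in $G-x$, so for $k\geq 1$
\[
A_k(G;x)=\sum_{y} w(xy)\,A_{k-1}(G-x;y).
\]
The base case $k=1$ is immediate: $A_1(G;x)+a=d(x)+a\le W+a$, where $d(x)=\sum_y w(xy)$ is the weighted degree of $x$.

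For the inductive step, fix $k\ge 2$ and write $d=d(x)$. Let $G'=G-x$; its total weight is $W'=W-d$. By the recursion,
\[
A_k(G;x)+aA_{k-1}(G;x)=\sum_y w(xy)\bigl(A_{k-1}(G';y)+aA_{k-2}(G';y)\bigr).
\]
This is exactly where the auxiliary parameter $a$ earns its place: the combination of two consecutive path sums reproduces itself one level down. When $k=2$ the term $A_{0}(G';y)=1$, so the same identity holds.

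Apply the induction hypothesis in $G'$ with the same $a$ and with $k-1$ in place of $k$. Each bracket is then at most $\bigl((W-d+a)/(k-1)\bigr)^{k-1}$, and summing against the weights $w(xy)$ gives
\[
A_k(G;x)+aA_{k-1}(G;x)\le d\left(\frac{W-d+a}{k-1}\right)^{k-1}.
\]

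It remains to show that $f(d)=d\,\bigl((S-d)/(k-1)\bigr)^{k-1}\le (S/k)^k$, where $S=W+a$ and $0\le d\le W\le S$. Write $d\cdot(S-d)^{k-1}=d\cdot\prod_{i=1}^{k-1}(S-d)$ as a product of $k$ nonnegative numbers. Rescaling, $(k-1)^{k-1}f(d)=d(S-d)^{k-1}$. By AM--GM applied to $d$ and $k-1$ copies of $(S-d)/(k-1)$, whose sum is $S$,
\[
d\left(\frac{S-d}{k-1}\right)^{k-1}\le\left(\frac{S}{k}\right)^k .
\]
This gives the claim, with equality possible only when $d=S/k$.

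The main obstacle is recognizing that the inductive step must add $a$ to the budget and keep $a$ fixed, rather than tracking the deleted weight separately. Once the identity above is in hand, the hypothesis for $G-x$ with the same $a$ gives the bound directly. One subtlety to verify is that $G-x$ loses exactly the weight $d$. A second is that the hypothesis is needed for graphs where $y$ may be isolated, where it still holds trivially. No result earlier in the paper is needed beyond the definitions, and the case $a=0$ agrees with the cited lemma of Cohen-Antonir and Shapira.
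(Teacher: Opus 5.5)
Your proposal is correct and follows the paper's proof essentially verbatim: induction on $k$, the first-edge decomposition $A_k(G;x)+aA_{k-1}(G;x)=\sum_y w(xy)\bigl(A_{k-1}(G-x;y)+aA_{k-2}(G-x;y)\bigr)$, the induction hypothesis applied in $G-x$ (total weight $W-d$) with the same $a$, and AM--GM on $d$ and $k-1$ copies of $(W-d+a)/(k-1)$.
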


\begin{proof}
We use induction on $k$. For $k=1$, since $A_0(G;x)=1$,
\[A_1(G;x)+aA_0(G;x)=\sum_{xy\in E(G)}w(xy)+a\leq W+a.\]
Suppose that $k\geq 2$, and set $d\defeq\sum_{xy\in E(G)}w(xy)$ and $H\defeq G-x.$
The total edge weight of $H$ is $W-d$. Decomposing each path according to
its first edge and applying the induction hypothesis to $H$, we obtain
\begin{align*}
  A_k(G;x)+aA_{k-1}(G;x)
  &=\sum_{xy\in E(G)}\bigg(w(xy)\bigl(A_{k-1}(H;y)+aA_{k-2}(H;y)\bigr)\bigg)\\
  &\leq d\left(\frac{W-d+a}{k-1}\right)^{k-1}\leq \left(\frac{W+a}{k}\right)^k.
\end{align*}
The last inequality follows from the arithmetic--geometric mean
inequality applied to $d$ and $k-1$ copies of
$(W-d+a)/(k-1)$.
\end{proof}

\begin{lem}\label{lem:two-root}
Let $(G,w)$ be a weighted graph with total edge weight $W$. For every $a\geq 0$, every pair of distinct vertices $x,y\in V(G)$, and every integer $k\geq 1$,
\[D_k(G;x,y)+R_k(G;x,y)+aD_{k-1}(G;x,y)\leq\frac{(W+a)^k}{(k+1)^{k-1}}.\]
\end{lem}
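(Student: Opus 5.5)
The proof will go by induction on $k$, peeling off the vertex $x$ exactly as in the proof of Lemma~\ref{lem:one-root}. Lemma~\ref{lem:one-root} will control the terms in which the path from $x$ is trivial.

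\emph{Base case $k=1$.} Let $d_x$ and $d_y$ be the weighted degrees of $x$ and $y$. We have $D_1(G;x,y)=(d_x-w(xy))+(d_y-w(xy))$, $R_1(G;x,y)=w(xy)$ and $D_0=1$. Hence the left-hand side equals $d_x+d_y-w(xy)+a$. Each edge is counted at most once here, so this is at most $W+a$, which is the right-hand side for $k=1$.

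\emph{Inductive step, $k\geq 2$.} Put $d\defeq\sum_{xz\in E(G)}w(xz)$ and $H\defeq G-x$; the total weight of $H$ is $W-d$. Classify each pair $(P,Q)$ counted in $D_k$ or $D_{k-1}$ by whether $P$ is trivial.
\begin{itemize}
\item If $P$ is trivial, $Q$ is a path from $y$ in $H$.
\item Otherwise $P$ starts with an edge $xz$ with $z\neq y$, and the rest of $P$ together with $Q$ is a disjoint pair in $H$ rooted at $z,y$.
\end{itemize}
Likewise every $k$-path from $x$ to $y$ with $k\geq 2$ starts with an edge $xz$, $z\neq y$, followed by a $(k-1)$-path from $z$ to $y$ in $H$. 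This gives the identity
\begin{align*}
&D_k(G;x,y)+R_k(G;x,y)+aD_{k-1}(G;x,y)\\
&\quad=\bigl(A_k(H;y)+aA_{k-1}(H;y)\bigr)+\sum_{xz\in E(G),\,z\neq y} w(xz)\bigl(D_{k-1}+R_{k-1}+aD_{k-2}\bigr)(H;z,y).
\end{align*}
Write $s\defeq W-d+a$. Lemma~\ref{lem:one-root} bounds the first bracket by $(s/k)^k$. The induction hypothesis bounds each second bracket by $s^{k-1}/k^{k-2}$, and $\sum_{z\neq y}w(xz)\le d$. So it remains to show
\[
\frac{s^k}{k^k}+\frac{d\,s^{k-1}}{k^{k-2}}\leq\frac{(s+d)^k}{(k+1)^{k-1}}\qquad(s,d\geq 0).
\]

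\emph{The one-variable inequality (main obstacle).} Both sides are homogeneous of degree $k$, so set $s+d=1$ and $t=s\in[0,1]$. We must show
\[
f(t)\defeq t^{k-1}\Bigl(\frac{t}{k^k}+\frac{1-t}{k^{k-2}}\Bigr)\leq\frac{1}{(k+1)^{k-1}}.
\]
For $k=2$ this reduces to $(d-\tfrac12)^2\geq 0$ when $s=1$, with equality at $t=2/3$. This suggests that in general the maximum is attained at $t=k/(k+1)$; that point corresponds to $d=s/k$, which matches the balanced path configuration.

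I would first locate the critical point from $f'(t)=0$, which amounts to solving a linear equation in $t$ after factoring out $t^{k-2}$. Then I would verify $f\leq (k+1)^{-(k-1)}$ there. If the critical point is not exactly $k/(k+1)$, I would instead try a weighted AM--GM argument, splitting $1=s+d$ into $k$ suitable parts. The crude fallback is to use $k^k\geq (k+1)^{k-1}$ at the endpoint $t=1$ and monotonicity elsewhere.

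This calculus/AM--GM check is the only non-structural step. Everything else is the decomposition above, which mirrors the proof of Lemma~\ref{lem:one-root}.
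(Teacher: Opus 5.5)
Your proposal is correct and follows the paper's proof essentially line by line: the same induction removing $x$, the same decomposition of $D_k$, $R_k$ and $D_{k-1}$ according to the first edge at $x$, Lemma~\ref{lem:one-root} for the trivial-path terms, and the same one-variable inequality (the paper parametrizes by $d/(W+a)$, which is your $1-t$). The step you left open goes through exactly as you guessed: $f'(t)=\frac{(k-1)t^{k-2}}{k^{k-1}}\bigl(k-(k+1)t\bigr)$, so $f$ increases up to $t=k/(k+1)$ and decreases afterwards, with $f\bigl(k/(k+1)\bigr)=(k+1)^{-(k-1)}$; your monotonicity fallback is not needed and would not work, since the maximum is interior.
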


\begin{proof}
We use induction on $k$. For $k=1$, the condition $V(P)\cap V(Q)=\varnothing$ excludes the edge $xy$ from $D_1(G;x,y)$, while $R_1(G;x,y)=w(xy)$.
Hence $D_1(G;x,y)+R_1(G;x,y)$ counts the weight of each edge incident with $x$ or $y$ exactly once. Since $D_0(G;x,y)=1$,
\[D_1(G;x,y)+R_1(G;x,y)+aD_0(G;x,y)\leq W+a.\]
This proves the result for $k=1$.

Now let $k\geq 2$, and assume that the result holds for $k-1$. Set $d\defeq\sum_{xz\in E(G)}w(xz) $ and $ H \defeq  G-x.$
The total edge weight of $H$ is $W-d$. For $D_k(G;x,y)$, first consider the case in which the path starting at $x$ is trivial. The other path is then a $k$-path in $H$ starting at $y$, and these terms sum to $A_k(H;y)$. If the path starting at $x$ is nontrivial, let $xz$ be its first edge. Since the two paths are vertex-disjoint, $z\neq y$. Removing $x$ and the edge $xz$ gives a pair of paths counted by $D_{k-1}(H;z,y)$.
Thus, \[D_k(G;x,y)=A_k(H;y)+\sum_{\substack{xz\in E(G)\\ z\neq y}}\bigg(w(xz)D_{k-1}(H;z,y)\bigg).\]
Similarly, by considering the first edge of a path from $x$ to $y$, we have
\[R_k(G;x,y)=\sum_{\substack{xz\in E(G)\\ z\neq y}}\bigg(w(xz)R_{k-1}(H;z,y)\bigg).\]
The same decomposition applied to $D_{k-1}(G;x,y)$ gives
\[D_{k-1}(G;x,y)=A_{k-1}(H;y)+\sum_{\substack{xz\in E(G)\\ z\neq y}}\bigg(w(xz)D_{k-2}(H;z,y)\bigg).\]

Combining these three identities, we obtain
\begin{align*}
  &D_k(G;x,y)+R_k(G;x,y)+aD_{k-1}(G;x,y)\\
  &\quad=A_k(H;y)+aA_{k-1}(H;y)+\sum_{\substack{xz\in E(G)\\ z\neq y}}\bigg(w(xz)\bigl(D_{k-1}(H;z,y)+R_{k-1}(H;z,y)+aD_{k-2}(H;z,y)\bigr)\bigg).
\end{align*}

Since $\sum_{\substack{xz\in E(G)\\ z\neq y}}w(xz)=d-w(xy)\leq d,$ Lemma~\ref{lem:one-root} and the induction hypothesis give
\begin{align*}
  D_k(G;x,y)+R_k(G;x,y)+aD_{k-1}(G;x,y)\notag\leq \left(\frac{W-d+a}{k}\right)^k+d\,\frac{(W-d+a)^{k-1}}{k^{k-2}}.
\end{align*}

We complete the proof by showing that
\[\left(\frac{W-d+a}{k}\right)^k+d\,\frac{(W-d+a)^{k-1}}{k^{k-2}}\leq\frac{(W+a)^k}{(k+1)^{k-1}}.\]
If $W+a=0$, then $W=a=d=0$, and the inequality is immediate. Assume that$W+a>0$, and set $t\defeq\frac{d}{W+a}.$ Since $0\leq d\leq W\leq W+a$, we have $0\leq t\leq 1$ and
\[W-d+a=(1-t)(W+a).\]
Dividing the left-hand side of the inequality above by $(W+a)^k$, we obtain \[f_k(t)\defeq\frac{(1-t)^{k-1}}{k^k}\bigl(1-t+k^2t\bigr).\]
A direct calculation gives \[f_k'(t)=\frac{k-1}{k^{k-1}}(1-t)^{k-2}\bigl(1-(k+1)t\bigr).\]
Thus, $f_k$ is increasing on $\left[0,1/(k+1)\right]$ and decreasing on $\left[1/(k+1),1\right]$.
Hence \[f_k(t)\leq f_k\left(\frac{1}{k+1}\right)=\frac{1}{(k+1)^{k-1}}.\]
This completes the proof.
\end{proof}

\begin{lem}\label{lem:rooted}
Let $(G,w)$ be a weighted graph with total edge weight $W$. Let $a\geq 0$ and suppose that $w(e)\leq a$ for every $e\in E(G).$
Then, for every pair of distinct vertices $x,y\in V(G)$ and every integer $k\geq 1$,
\[D_k(G;x,y)+(k+1)R_k(G;x,y)\leq\frac{(W+a)^k}{(k+1)^{k-1}}.\]
\end{lem}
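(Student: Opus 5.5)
The plan is to reduce the statement directly to Lemma~\ref{lem:two-root}. The key claim is the comparison
\[k\,R_k(G;x,y)\leq a\,D_{k-1}(G;x,y),\]
which uses the hypothesis $w(e)\leq a$. Granting it, we get
\[D_k(G;x,y)+(k+1)R_k(G;x,y)=D_k(G;x,y)+R_k(G;x,y)+kR_k(G;x,y)\leq D_k(G;x,y)+R_k(G;x,y)+aD_{k-1}(G;x,y),\]
and Lemma~\ref{lem:two-root} bounds the right-hand side by $(W+a)^k/(k+1)^{k-1}$.

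To prove the comparison, I would use an edge-deletion injection. Let $P=v_0v_1\cdots v_k$ be a $k$-path from $x=v_0$ to $y=v_k$, and fix $i\in\{1,\dots,k\}$. Deleting the edge $v_{i-1}v_i$ splits $P$ into two vertex-disjoint paths:
\begin{itemize}
\item $P_1=v_0\cdots v_{i-1}$, starting at $x$, of length $i-1$;
\item $Q=v_k v_{k-1}\cdots v_i$, starting at $y$, of length $k-i$.
\end{itemize}
Their lengths sum to $k-1$, so the ordered pair $(P_1,Q)$ is one of the pairs counted in $D_{k-1}(G;x,y)$. Moreover,
\[w(P)=w(v_{i-1}v_i)\,w(P_1)\,w(Q)\leq a\,w(P_1)\,w(Q).\]

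The map $(P,i)\mapsto(P_1,Q)$ is injective. Indeed, $P$ is recovered by concatenating $P_1$, the edge joining the last vertex of $P_1$ to the last vertex of $Q$, and the reversal of $Q$; and $i$ is recovered as the number of vertices of $P_1$. Summing over all $P\in\mathcal P_k(G;x,y)$ and all $k$ choices of $i$ therefore gives $kR_k(G;x,y)\leq a\,D_{k-1}(G;x,y)$, because all terms in $D_{k-1}$ are nonnegative.

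The only step needing care is this injectivity and the bookkeeping of lengths. That includes the boundary cases $i=1$ and $i=k$, where one of the two paths is trivial; these are still valid terms of $D_{k-1}$, since $D_{k-1}$ includes the $j=0$ and $j=k-1$ summands. I do not expect a genuine obstacle beyond this.
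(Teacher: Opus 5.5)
Your proposal is correct and follows essentially the same route as the paper: both prove $kR_k(G;x,y)\leq a\,D_{k-1}(G;x,y)$ via the injective edge-deletion map $(P,i)\mapsto(v_0\cdots v_{i-1},\,v_k\cdots v_i)$, then combine it with Lemma~\ref{lem:two-root}. Your explicit handling of the boundary cases $i=1$ and $i=k$ is a small extra detail the paper leaves implicit.
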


\begin{proof}
We first show that $kR_k(G;x,y)\leq aD_{k-1}(G;x,y).$ Let $P=v_0v_1\cdots v_k\in\mathcal{P}_k(G;x,y), $where $v_0=x$ and $v_k=y$. For each $i\in\{1,\ldots,k\}$, remove the edge $v_{i-1}v_i$ and set $P_1=v_0v_1\cdots v_{i-1}$ and $P_2=v_kv_{k-1}\cdots v_i.$
The paths $P_1$ and $P_2$ start at $x$ and $y$, respectively. They are vertex-disjoint, and their lengths sum to $k-1$. Thus, $(P_1,P_2)$ is counted by $D_{k-1}(G;x,y)$.

No ordered pair $(P_1,P_2)$ is obtained more than once. Indeed, the length of $P_1$ determines $i$, and $P$ is recovered by following $P_1$, then the deleted edge, and then $P_2$ in reverse. Also, \[w(P)=w(v_{i-1}v_i)w(P_1)w(P_2)\leq a~w(P_1)w(P_2).\]
Since every such pair $(P_1,P_2)$ is counted by $D_{k-1}(G;x,y)$, summing over all $P$ and $i$ gives
\[kR_k(G;x,y)=\sum_{P\in\mathcal{P}_k(G;x,y)}\sum_{i=1}^{k}w(P)\leq aD_{k-1}(G;x,y).\]

It now follows from Lemma~\ref{lem:two-root} that
\begin{align*}
  D_k(G;x,y)+(k+1)R_k(G;x,y)
  &=D_k(G;x,y)+R_k(G;x,y)+kR_k(G;x,y)\\
  &\leq D_k(G;x,y)+R_k(G;x,y)+aD_{k-1}(G;x,y)\\
  &\leq \frac{(W+a)^k}{(k+1)^{k-1}}.
\end{align*}
\end{proof}

We need the following optimality identity of Heath, Martin, and Wells
\cite[Lemma 4.3]{HeathMartinWells2025}.

\begin{lem}[Heath, Martin, and Wells~\cite{HeathMartinWells2025}]
\label{lem:HMW-optimality}
Let $m$ and $s$ be positive integers, and let $H_1,\ldots,H_s$ be graphs with $m$ edges. Let $K$ be a finite complete graph with at least two vertices, and let $\gamma_1,\ldots,\gamma_s$ be real numbers. Set $\mathcal O\defeq\max_{\nu\in\Delta^K}\sum_{i=1}^{s}\gamma_i\cdot \beta(\nu;H_i).$
If $\mu\in\Delta^K$ attains this maximum, then, for every edge $e\in E(K)$,
\[m\cdot \mathcal O\cdot \mu(e)=\sum_{i=1}^{s}\gamma_i \sum_{\substack{H'\in\cC(K,H_i)\\e\in E(H')}}\mu(H').\]
\end{lem}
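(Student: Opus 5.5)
The statement is a first-order (Karush--Kuhn--Tucker) condition for maximizing a homogeneous polynomial over the simplex $\Delta^K$. I will prove it using Euler's identity for homogeneous functions.

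\textbf{Setup.} Regard each $\nu\in\Delta^K$ as a vector $(\nu(e))_{e\in E(K)}$ and set
\[F(\nu)\defeq\sum_{i=1}^{s}\gamma_i\,\beta(\nu;H_i)=\sum_{i=1}^{s}\gamma_i\sum_{H'\in\cC(K,H_i)}\prod_{f\in E(H')}\nu(f).\]
Extended to all of $\mathbb R^{E(K)}$, this is a polynomial with two key properties.
\begin{itemize}
\item It is homogeneous of degree $m$, since every $H_i$ has exactly $m$ edges.
\item It is multilinear, since each copy $H'$ uses each edge at most once.
\end{itemize}
Because $K$ has at least two vertices, $E(K)\neq\varnothing$. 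Hence $\Delta^K$ is a nonempty compact set, and the maximum $\mathcal O$ exists.

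\textbf{Step 1: the partial derivatives.} By multilinearity,
\[\partial_e F(\nu)=\sum_{i=1}^{s}\gamma_i\sum_{\substack{H'\in\cC(K,H_i)\\ e\in E(H')}}\prod_{f\in E(H')\setminus\{e\}}\nu(f),\]
and therefore
\[\mu(e)\,\partial_eF(\mu)=\sum_{i=1}^{s}\gamma_i\sum_{\substack{H'\in\cC(K,H_i)\\e\in E(H')}}\mu(H').\]
So the right-hand side of the claimed identity is exactly $\mu(e)\,\partial_eF(\mu)$. It remains to show that $\mu(e)\,\partial_eF(\mu)=m\,\mathcal O\,\mu(e)$ for every edge $e$.

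\textbf{Step 2: edges outside the support.} If $\mu(e)=0$, both sides of the identity vanish. On the right, every term $\mu(H')$ with $e\in E(H')$ contains the factor $\mu(e)=0$. So only edges with $\mu(e)>0$ need attention.

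\textbf{Step 3: the derivative is constant on the support.} Take $e,e'\in\supp\mu$ and suppose $\partial_eF(\mu)>\partial_{e'}F(\mu)$. Consider the perturbation $\mu_\varepsilon=\mu+\varepsilon(\mathbf 1_e-\mathbf 1_{e'})$.
\begin{itemize}
\item For small $\varepsilon>0$, $\mu_\varepsilon$ stays in $\Delta^K$, because $\mu(e')>0$ and the total mass is unchanged.
\item Its value satisfies $F(\mu_\varepsilon)=F(\mu)+\varepsilon\bigl(\partial_eF(\mu)-\partial_{e'}F(\mu)\bigr)+O(\varepsilon^2)$, which exceeds $\mathcal O$ for small $\varepsilon$.
\end{itemize}
This contradicts the maximality of $\mu$. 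Hence there is a constant $\lambda$ with $\partial_eF(\mu)=\lambda$ for all $e\in\supp\mu$.

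\textbf{Step 4: identifying $\lambda$.} By Euler's identity for homogeneous polynomials of degree $m$,
\[m\,\mathcal O=mF(\mu)=\sum_{e}\mu(e)\,\partial_eF(\mu)=\lambda\sum_{e\in\supp\mu}\mu(e)=\lambda.\]
Therefore, for every $e\in\supp\mu$,
\[\mu(e)\,\partial_eF(\mu)=m\,\mathcal O\,\mu(e).\]
Combined with Step 2, this gives the identity for every edge $e\in E(K)$.

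The only step needing care is Step 3. One must perturb only between edges that both lie in the support, so that feasibility is preserved. This is why the constancy of $\partial_eF$ is claimed only on $\supp\mu$, and why Step 2 handles the edges outside it separately.
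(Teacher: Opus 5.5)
The paper does not prove this lemma; it quotes it from Heath, Martin, and Wells (their Lemma 4.3), so there is no in-paper argument to compare with. Your proof is correct and complete, and it follows the standard first-order optimality route for such statements. You show that $\partial_e F$ is constant on $\supp\mu$ (what a Lagrange-multiplier argument on the face of the simplex spanned by $\supp\mu$ would give), then use Euler's identity for the degree-$m$ homogeneous multilinear polynomial $F$ to identify the constant as $m\mathcal O$. Your two-edge mass transfer is an elementary substitute for the Lagrange multiplier theorem, and Step 2 correctly handles edges outside the support, where both sides of the identity vanish.
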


We also use the following weighted cycle inequality of Lv, Gy\H{o}ri, He, Salia, Tompkins, and Zhu \cite[Theorem 2]{LvEtAl2024}.

\begin{lem}[Lv, Gy\H{o}ri, He, Salia, Tompkins, and Zhu \cite{LvEtAl2024}]
\label{lem:cycle}
For every finite complete graph $K$, every integer $m\geq 3$, and every $\mu\in\Delta^K$, $\beta(\mu;C_m)\leq\frac{1}{m^m}.$
Equality holds if and only if $\mu$ is the uniform measure on the edge set of a copy of $C_m$ in $K$.
\end{lem}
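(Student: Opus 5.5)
The plan is to combine the optimality identity of Lemma~\ref{lem:HMW-optimality} with the rooted path inequality of Lemma~\ref{lem:rooted}. Since $\Delta^K$ is compact and $\beta(\cdot;C_m)$ is continuous, the maximum $\mathcal O\defeq\max_{\nu\in\Delta^K}\beta(\nu;C_m)$ is attained; fix a maximizer $\mu$. Clearly $\mathcal O>0$, because the uniform measure on a copy of $C_m$ gives $m^{-m}$.

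First I apply Lemma~\ref{lem:HMW-optimality} with $s=1$, $H_1=C_m$ and $\gamma_1=1$. Let $e=xy$ be an edge in the support. Every copy of $C_m$ through $e$ is $e$ together with an $(m-1)$-path from $x$ to $y$ in $K-xy$. With $w=\mu$, the identity therefore reads
\[
m\,\mathcal O\,\mu(e)=\mu(e)\,R_{m-1}(K-xy;x,y),\qquad\text{so}\qquad R_{m-1}(K-xy;x,y)=m\,\mathcal O .
\]
Now I choose $e$ to be an edge of maximum weight, and set $a\defeq\mu(e)$. The weighted graph $G\defeq K-xy$ with weights $\mu$ has total weight $W=1-a$, and every edge of $G$ has weight at most $a$. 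Lemma~\ref{lem:rooted} with $k=m-1\geq 2$ then gives
\[
D_{m-1}(G;x,y)+m\,R_{m-1}(G;x,y)\leq \frac{(1-a+a)^{m-1}}{m^{m-2}}=\frac{1}{m^{m-2}} .
\]
Since $D_{m-1}\geq 0$, this yields $m^2\mathcal O\leq m^{-(m-2)}$, i.e.\ $\mathcal O\leq m^{-m}$. This proves the inequality.

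For the equality case, suppose $\beta(\mu;C_m)=m^{-m}$. Then $\mu$ is a maximizer, so every inequality in the chain above is tight for a maximum-weight edge $xy$. First, $D_{m-1}(G;x,y)=0$. Second, the bound $kR_k\leq aD_{k-1}$ from the proof of Lemma~\ref{lem:rooted} is tight, so every edge lying on an $(m-1)$-path from $x$ to $y$ in $G$ has weight exactly $a$. Third, the extremal conditions in Lemmas~\ref{lem:one-root} and~\ref{lem:two-root} hold: the AM--GM step and the maximization of $f_k$ at $t=1/(k+1)$ force the weight incident with $x$ to be exactly a $1/m$ fraction of the total, level by level.

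I would then argue as follows. $D_{m-1}=0$ forbids any $(m-1)$-path in $G-x$ starting at $y$. It also forbids any pair of disjoint paths from $x$ and from $y$ of total length $m-1$. Combined with $R_{m-1}>0$, this should force every support edge to lie on a single cycle through $xy$. If the support had a vertex of degree $\geq 3$, or a second cycle, one could extend a rooted path and obtain a positive term in $D_{m-1}$. Hence the support is exactly one $C_m$, and it is uniform because all its edges have weight $a$ and the total weight is $1$. Conversely, the uniform measure on a $C_m$ clearly attains $m^{-m}$.

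I expect this structural equality analysis to be the main obstacle. The inequality itself drops out immediately once the maximum-weight edge is chosen; the care is needed in pinning down the support from $D_{m-1}=0$.
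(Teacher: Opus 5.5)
\textbf{Inequality: correct.} The paper does not prove this lemma. It quotes it from Lv et al.\ and uses only the equality characterization, at the end of the proof of Theorem~\ref{thm:weighted}. Your derivation of the inequality is correct and not circular. It is the paper's proof of \eqref{eq:strong} with $\gamma_2=0$; in fact the bound already follows from \eqref{eq:strong}, since $\beta(\mu;P_{m+1})\ge 0$. You should also dispose of $|V(K)|<m$ separately, because there $\mathcal O=0$ and your claim $\mathcal O>0$ fails.

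\textbf{Equality: genuine gap.} Let $C$ be the $m$-cycle formed by $xy$ and a positive-weight path $xv_1\cdots v_{m-2}y$. You correctly deduce that every edge of $C$ has weight $a$.

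However, $D_{m-1}(G;x,y)=0$ only says that no positive-weight path on $m+1$ vertices passes through $xy$. This excludes positive edges between $V(C)$ and $V(K)\setminus V(C)$, and nothing more. Chords of $C$ and positive edges in components avoiding $V(C)$ never contribute to $D_{m-1}(G;x,y)$. For example, $C$ plus a disjoint positive edge, or (for $m\ge 4$) extra weight on chords of $C$, both keep $D_{m-1}(G;x,y)=0$. So your claim that a vertex of degree at least $3$ or a second cycle yields a positive term of $D_{m-1}$ is false. Without knowing $ma=1$, you cannot conclude that the support is $E(C)$.

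\textbf{Missing step.} You need to use the quantitative tightness you only gestured at.
\begin{itemize}
\item Tightness of $f_{m-1}$ at $t=1/m$ forces the weighted degree of $x$ in $G$ to be exactly $1/m$.
\item Tightness of Lemma~\ref{lem:one-root} for $A_{m-1}(G-x;y)+aA_{m-2}(G-x;y)$ then propagates along every positive branch: each successive root has weighted degree exactly $1/m$ in the current remaining graph.
\item Follow the positive branch $y,v_{m-2},\ldots,v_2$. After deleting $x,y,v_{m-2},\ldots,v_2$, the remaining graph has total weight $1-a-\frac1m-\frac{m-2}{m}=\frac1m-a$.
\item The $k=1$ case of that tightness forces $v_1$ to be incident with all of this remaining weight.
\item Since $v_1$ has no positive edges leaving $V(C)$ (this is what $D_{m-1}=0$ does give you), that weight is $0$, so $a=1/m$.
\end{itemize}
Because all $m$ edges of $C$ have weight $a$, the cycle $C$ then carries total weight $ma=1$, so $\mu$ is uniform on $E(C)$.
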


\begin{proof}[\textbf{Proof of Theorem~\ref{thm:weighted}}]
Fix a finite complete graph $K$. If $|V(K)|<m$, then $K$ contains neither a copy of $C_m$ nor a copy of $P_{m+1}$. Thus \eqref{eq:strong} is
immediate, and equality in the inequality of Conjecture~\ref{conj:HMW} cannot hold. We may therefore assume that $|V(K)|\geq m$.

Set \[\mathcal O^{'} \defeq \max_{\nu\in\Delta^K} \bigl(2m\cdot \beta(\nu;C_m)+2\beta (\nu;P_{m+1})\bigr).\]
The maximum exists because $\Delta^K$ is compact and the function being maximized is continuous. Let $\mu\in\Delta^K$ attain $\mathcal O^ {'}$.
Let $xy\in E(K)$ satisfy $\mu(xy)=\max_{e\in E(K)}\mu(e),$ and set $a\defeq\mu(xy).$
Since $\sum_{e\in E(K)}\mu(e)=1$, we have $a>0$.
Let $G\defeq K-xy$ and give each edge $e\in E(G)$ weight $w(e)\defeq\mu(e).$
Then $(G,w)$ has total edge weight $W=1-a,$ and $w(e)\leq a$ for every $e\in E(G).$

Deleting $xy$ from a copy of $C_m$ containing $xy$ gives an $(m-1)$-path in $G$ from $x$ to $y$. Conversely, adding $xy$ to such a path gives a unique copy of $C_m$. Hence
\[\sum_{\substack{C\in\cC(K,C_m)\\xy\in E(C)}}\mu(C)=aR_{m-1}(G;x,y).\]

Similarly, deleting $xy$ from a copy of $P_{m+1}$ containing $xy$ gives two vertex-disjoint paths in $G$, starting at $x$ and $y$, whose lengths
sum to $m-1$. Conversely, adding $xy$ to any pair counted by $D_{m-1}(G;x,y)$ gives a unique copy of $P_{m+1}$.
The roots $x$ and $y$ fix the order of the two paths, so there is no extra factor of $2$.
Therefore,
\[\sum_{\substack{P\in\cC(K,P_{m+1})\\xy\in E(P)}}\mu(P)=aD_{m-1}(G;x,y).\]

Apply Lemma~\ref{lem:HMW-optimality} with $H_1=C_m,H_2=P_{m+1},\gamma_1=2m,\gamma_2=2.$
For the edge $xy$, the lemma gives
\begin{align*}
  m\cdot \mathcal O^{'}\cdot a
  &=2m\sum_{\substack{C\in\cC(K,C_m)\\xy\in E(C)}}\mu(C)+2\sum_{\substack{P\in\cC(K,P_{m+1})\\xy\in E(P)}}\mu(P)\\
  &=2maR_{m-1}(G;x,y)+2aD_{m-1}(G;x,y).
\end{align*}
Since $a>0$, dividing by $a$ gives
\[\mathcal O^{'}=\frac{2}{m}\bigl(D_{m-1}(G;x,y)+mR_{m-1}(G;x,y)\bigr).\]

Since $W+a=1$, Lemma~\ref{lem:rooted}, applied with $k=m-1$, gives
\[D_{m-1}(G;x,y)+mR_{m-1}(G;x,y)\leq\frac{1}{m^{m-2}}.\]
It follows that \[\mathcal O^{'}\leq\frac{2}{m}\cdot\frac{1}{m^{m-2}}=\frac{2}{m^{m-1}}.\]
Since $\mathcal O^{'}$ is the maximum over all measures in $\Delta^K$, this proves \eqref{eq:strong}.

It remains to prove the equality statement in Conjecture~\ref{conj:HMW}. Suppose that $\lambda\in\Delta^K$ satisfies
\[2m\cdot \beta(\lambda;C_m)+\beta(\lambda;P_{m+1})=\frac{2}{m^{m-1}}.\]
By \eqref{eq:strong},
\[\frac{2}{m^{m-1}}=2m\cdot \beta(\lambda;C_m)+\beta(\lambda;P_{m+1})\leq 2m\cdot \beta(\lambda;C_m)+2\cdot \beta(\lambda;P_{m+1})\leq \frac{2}{m^{m-1}}.\]
It follows that $\beta(\lambda;P_{m+1})=0$ and $\beta(\lambda;C_m)=\frac{1}{m^m}.$
By Lemma~\ref{lem:cycle}, $\lambda$ is the uniform measure on the edge set of a copy of $C_m$ in $K$.
Conversely, if $\lambda$ is uniform on the edge set of a copy of $C_m$, then $\beta(\lambda;C_m)=1/m^m$ and $\beta(\lambda;P_{m+1})=0$, so equality holds.
\end{proof}

\end{document}